\documentclass[12pt]{amsart}
\usepackage[dvips]{graphicx,color}
\usepackage{amsmath,amssymb,amsthm,mathrsfs}
\usepackage[vcentermath]{youngtab}
\usepackage{amscd}
       
\usepackage{a4wide}
\usepackage{euscript}
\usepackage{amsthm}
\usepackage{amsopn}
\usepackage{helvet}
\usepackage{color}
\usepackage[all]{xy}

\theoremstyle{definition}

\newtheorem{theorem}{Theorem}
\newtheorem{lemma}[theorem]{Lemma}
\newtheorem{prop}[theorem]{Proposition}
\newtheorem{example}[theorem]{Example}
\newtheorem{corollary}[theorem]{Corollary}

\newcommand{\sP}{\mathcal{P}}
\newcommand{\sQ}{\mathcal{Q}}

\begin{document}

\title{Properties of Generalized Derangement Graphs}
\author{Hannah Jackson}
\address{Mathematics Department, 
Syracuse University, 
215 Carnegie, 
Syracuse, New York 13244 U.S.A.}
\email{hljackso@syr.edu}

\author{Kathryn Nyman}
\address{Mathematics Department, 
Willamette University, 900 State Street, 
Salem, Oregon 97301 U.S.A.}
\email{knyman@willamette.edu}

\author{Les Reid}
\address{Mathematics Department,
Missouri State University, 901 South National Avenue,
Springfield, Missouri 65897 U.S.A.}
\email{LesReid@MissouriState.edu}

\subjclass[2000]{05C69, 05A05, 05C45} 
\date{\today}
\keywords{Derangements,  Eulerian, Chromatic Number, Maximal Clique, Cayley Graph, Independent Set}

\begin{abstract}
A permutation $\sigma \in S_n$ is a $k$-derangement if for any subset $X=\{a_1,\ldots,a_k\} \subseteq [n]$, $\{\sigma(a_1),\ldots,\sigma(a_k)\} \neq X$.  One can form the $k$-derangement graph on the set of permutations of $S_n$ by connecting two permutations $\sigma$ and $\tau$ if $\sigma \tau^{-1}$ is a $k$-derangement.  We characterize when such a graph is connected or Eulerian.  For  $n$ an odd prime power, we determine the independence, clique and chromatic number of the 2-derangement graph.  

\end{abstract}

\maketitle

\section{ Introduction}

Permutations which leave no element fixed, known as derangements,
were first considered by Pierre Raymond de Montmort in 1708 and have been 
extensively studied since. A derangement graph is a graph whose vertices are 
the elements of the symmetric group $S_n$ and whose edges connect two 
permutations that differ by a derangement. Derangement graphs have been shown
to be connected (for $n>3$), Hamiltonian, and their independence number, 
clique number, and chromatic number have been calculated \cite{R}.

The concept of a derangement can be generalized to a $k$-derangement, a
permutation in $S_n$ such that the induced permutation on the set of all
unordered $k$-tuples leaves no $k$-tuple fixed. A $k$-derangement graph is defined
in an analogous manner to a derangement graph. In this paper, we investigate
some of the graph-theoretical properties of $k$-derangement graphs.

\section{Preliminaries}

Let $S_n$ be the group of permutations on the set $[n] = \{1, 2, \ldots, n\}$, and
denote by $[n]^{(k)}$ the set of unordered $k$-tuples
with entries from $[n]$.  
Note that a permutation $\sigma \in S_n$ induces a
permutation $\sigma_{(k)}$ of unordered $k$-tuples by
$\sigma_{(k)}(\{a_1,\ldots,a_k\}) = \{\sigma(a_1),\ldots,\sigma(a_k)\}$.
For example, with $n=4$, $k=2$, and $\sigma = (1234)$ in cycle notation, we have

$$(1234)_{(2)}(\{1,2\})=\{(1234)(1),(1234)(2)\}=\{2,3\}$$
$$(1234)_{(2)}(\{1,3\})=\{(1234)(1),(1234)(3)\}=\{2,4\}$$
$$(1234)_{(2)}(\{1,4\})=\{(1234)(1),(1234)(4)\}=\{2,1\}=\{1,2\}$$
$$(1234)_{(2)}(\{2,3\})=\{(1234)(2),(1234)(3)\}=\{3,4\}$$
$$(1234)_{(2)}(\{2,4\})=\{(1234)(2),(1234)(4)\}=\{3,1\}=\{1,3\}$$
$$(1234)_{(2)}(\{3,4\})=\{(1234)(3),(1234)(4)\}=\{4,1\}=\{1,4\}.$$

Let
$\mathcal{D}_n := \{\sigma \in S_n | \sigma(x) \neq x, \forall x \in
$[$n$]$ \}$ denote the $\emph{ordinary derangements}$ on [$n$].
Extending this concept, we say that a permutation $\sigma \in S_n$
is a $k$$\emph{-derangement}$ if $\sigma_{(k)}(x) \neq x$ for all $x
\in $[$n$]$^{(k)}$. In other words, a $k$-derangement in $S_n$ is a
permutation (of [$n$]) which induces a permutation (of
[$n$]$^{(k)}$) which leaves no $k$-tuple fixed. The set of
$k$-derangements in $S_n$ is denoted $\mathcal{D}_{k,n}$, and the
number of $k$-derangements in $S_n$ is denoted $D_k(n)$ ($D_k(n) =
|\mathcal{D}_{k,n}|$).  
The example above shows that $(1234)$ is in $\mathcal{D}_{2,4}$.  Specifically, 
 $\mathcal{D}_{2,4} = \{(1234),(1243),(1324),(1342),(1423),(1432),(123)(4),(124)(3)(132)(4),(134)(2),\\
 (142)(3),(143)(2),(234)(1),(243)(1)
 \}$, and thus
$D_2(4)=14$.
Note that $\mathcal{D}_n = \mathcal{D}_{1,n}$, and
$D_1(n)$ is the ordinary derangement number.

The
cycle structure of a permutation $\sigma$, denoted $C_{\sigma}$, is
the multiset of the lengths of the cycles in its cycle
decomposition (e.g., $C_{(12)(3)(45)} = \{2,2,1\}$).
 Note that the cycle structure of
$\sigma \in S_n$ is a partition of $n$.  Given a partition $r \vdash n$, let $P_r$ be the set of all
permutations in $S_n$ whose cycle structure is $r$.  For example, 
$P_{\{2,1,1\}} = \{(12),(13),(14),(23),(24),(34)\}$.

We first note that if the cycle structure of a permutation $\sigma$ contains a multiset which partitions $k$, then $\sigma$ is not a $k$-derangement.   For
example, $(12)(34)$ will be a $3$-derangement in $S_4$, but
$(12)(3)(4)$ will not be, because $\{2,1\} \subseteq C_{(12)(3)(4)}
= \{2,1,1\}$ is a partition of 3.
Indeed, if $\{q,r,\ldots,s\}$ is a partition of $k$, and $(a_1\ldots a_q)(b_1 \ldots b_r)\ldots (c_1 \ldots c_s)$ are cycles of $\sigma$, then for $x = \{a_1,\ldots,a_q,b_1, \ldots,b_r,  c_1, \ldots, c_s\}, \sigma_{(k)}(x) = x$.  
Conversely, if $\sigma$ has no set of cycles whose lengths partition $k$, then given any $x \in [n]^{(k)}$, there is a cycle in $\sigma$ which contains at least one element in $x$ and contains some element not in $x$. Hence $\sigma$ sends an element in $x$ to an element not in $x$ and so $\sigma_{(k)}(x) \neq x$.  

Thus we observe that the cycle structure of a permutation determines whether or not it is a $k$-derangement, and we have the following.
\begin{prop}
A permutation $\sigma \in S_n$ is a $k$-derangement if and only if the cycle decomposition of $\sigma$ does not contain a set of cycles whose lengths partition $k$.
\end{prop}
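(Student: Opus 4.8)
The plan is to prove both implications by contraposition, and the geometric heart of the argument is a single observation: a permutation $\sigma$ satisfies $\sigma_{(k)}(x) = x$ for some $x \in [n]^{(k)}$ precisely when $x$ is a disjoint union of complete cycles of $\sigma$. Once this is in hand, both directions follow from a cardinality count, since the lengths of a collection of disjoint cycles sum to $|x| = k$ exactly when those lengths form a partition of $k$.

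First I would dispatch the ``only if'' direction in contrapositive form: if the cycle decomposition of $\sigma$ contains cycles $(a_1 \ldots a_q), (b_1 \ldots b_r), \ldots, (c_1 \ldots c_s)$ whose lengths $\{q, r, \ldots, s\}$ partition $k$, then $\sigma$ is not a $k$-derangement. Here I would set $x = \{a_1, \ldots, a_q, b_1, \ldots, b_r, \ldots, c_1, \ldots, c_s\}$. Since the chosen cycles are disjoint and their lengths sum to $k$, we have $|x| = k$, so $x \in [n]^{(k)}$. Each cycle permutes its own entries among themselves, so $\sigma$ carries $x$ bijectively onto $x$, giving $\sigma_{(k)}(x) = x$. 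This exhibits a fixed element of $[n]^{(k)}$, whence $\sigma \notin \mathcal{D}_{k,n}$.

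For the converse I would again argue contrapositively, assuming $\sigma$ is not a $k$-derangement and producing a set of cycles whose lengths partition $k$. So suppose $\sigma_{(k)}(x) = x$ for some $x \in [n]^{(k)}$. The key step is to show that $x$ is a union of complete cycles of $\sigma$. Fix $a \in x$ and consider the cycle $(a\ \sigma(a)\ \sigma^2(a)\ \ldots)$ containing it. Because $\sigma$ maps $x$ into $x$, we get $\sigma(a) \in x$, and by induction every iterate $\sigma^i(a)$ lies in $x$; hence the whole cycle through $a$ is contained in $x$. Therefore $x$ decomposes as a disjoint union of complete cycles of $\sigma$, and the lengths of those cycles sum to $|x| = k$, yielding the desired partition of $k$.

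The argument is essentially routine, and the only point deserving care is the inductive step showing that $\sigma$-invariance of $x$ forces the entire cycle of each of its elements to lie in $x$; this rests on the fact that $\sigma$ restricts to a bijection of the finite set $x$ onto itself, so no element of a cycle meeting $x$ can escape $x$. I do not anticipate any genuine obstacle, since the preliminary discussion preceding the statement already records both inclusions explicitly; the write-up merely needs to assemble them into the two contrapositive implications.
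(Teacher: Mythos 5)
Your proof is correct and follows essentially the same route as the paper: the forward union-of-cycles construction is identical, and your contrapositive treatment of the converse (a fixed $k$-set must be a union of complete cycles) is just the logically equivalent restatement of the paper's direct argument that some cycle must straddle $x$. No gaps.
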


Let $CD_{ k,n}$ be the set of cycle structures corresponding to $k$-derangements in $S_n$ [e.g., $CD_{ 2,4} =
\{\{4\},\{3,1\}\}$],
Note that $\mathcal{D}_{k,n} = \mathcal{D}_{(n-k),n}$. This follows
from the fact that if a cycle structure $C_\sigma$ is in $CD_{ k,n}$, then $C_\sigma$ is in  $CD_{ (n-k),n}$ as well.  

Let $G$ be a group, and let $S \subseteq G$ such that if $s$ is in $S$, then
$ s^{-1} $ is in $S$. The {$\emph{Cayley graph}$ $\Gamma(G,S)$
is the graph whose vertices are the elements of $G$ such that an edge connects
two vertices $u,v \in G$ if $su = v$ for some $s \in
S$. A $k$-{$\emph{derangement graph}$ is a Cayley graph defined by
$\Gamma_{k,n} := \Gamma(S_n, \mathcal{D}_{k,n})$. (Note that
$\mathcal{D}_{k,n}$ is symmetric, as the inverse of a
$k$-derangement is a $k$-derangement, and thus satisfies the
requirements for a Cayley graph.)  It is worth noting that $\Gamma_{k,n}$ is, by construction,
$D_k(n)$-regular, and that since $\mathcal{D}_{k,n} =
\mathcal{D}_{(n-k),n}$, $\Gamma_{k,n} = \Gamma_{(n-k),n}$.
Figure \ref{gamma(2,3)} illustrates the 2-derangement
graph on 6 vertices, $\Gamma_{2,3}$.

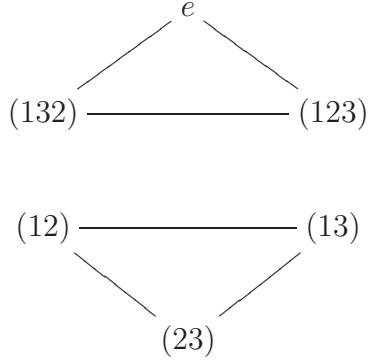
\begin{figure}
\begin{center}
$\displaystyle{ \xymatrix{
& e \ar@{-}[dl] \ar@{-}[dr] & \\
(132) \ar@{-}[rr] && (123) \\
(12) \ar@{-}[rr] && (13) \\
& (23) \ar@{-}[ul] \ar@{-}[ur] }}$
\end{center}
\caption{ $\Gamma_{2,3}$}
\label{gamma(2,3)}
\end{figure}

 It is possible to consider $k$-derangements in $S_n$ for
any positive $k$ and $n$. However, if $k=n$, there will be no $k$-derangements in
$S_n$, since every partition in $S_n$ will have a cycle structure
such that the cycle lengths partition $k$. As such, $\Gamma_{k,n}$
will be the empty graph on $n$ vertices. If $k > n$, then every
permutation in $S_n$ is a $k$-derangement vacuously, and thus
$\Gamma_{k,n}$ will be the complete graph on $|S_n|$ vertices.
As neither of these cases is particularly interesting,
henceforth we will only consider $k$-derangements where $k < n$.

\section{Properties of derangement graphs}

Figure 1 shows that $\Gamma_{2,3}$ is not a connected
graph, and since $\Gamma_{2,3} =
\Gamma_{1,3}$, we see that $\Gamma_{k,3}$ is disconnected, for all $k <n$.  But this is an exception rather than the rule, as the following theorem demonstrates.

\begin{theorem} For $n>3$ and $k<n$, $\Gamma_{k,n}$ is connected.
\label{thm:connected}

\begin{proof} Every permutation in $S_n$ can be written as the product of adjacent transpositions $(h \hspace{.05in}(h+1))$. These, in turn, can be expressed as the product of
two $k$-derangements, so long as $n > 3$, as we will demonstrate.   As a result, for $n > 3$, the elements of $\mathcal{D}_{k,n}$
generate $S_n$, which means that every vertex of $\Gamma_{k,n}$ can
be reached by a path from the identity. 

We show that the permutation $(1\hspace{.05in}2)$ can be written as the
product of two $k$-derangements and then note that since it is the form and not the individual labels that are important, any adjacent transposition can be written as the product of two $k$-derangements.
We consider two cases, the case where $k=1$, and the
case where $k \geq 2$.  

\vspace{.25cm}

\noindent {Case 1:} If $k=1$, then $(1\hspace{.05in}2) = (1\hspace{.05in}2 \ldots n)^2 \cdot
(n  \hspace{.05in}(n-1) \ldots 1)^2(1\hspace{.05in}2)$. We claim that $(1\hspace{.05in} 2 \ldots n)^2$ and
$(n \hspace{.05in}(n-1) \ldots 1)^2(1 \hspace{.05in}2)$ are each 1-derangements in $S_n$ for all $n >
3$. 
If $n$ is even, then
$(1\hspace{.05in}2 \ldots n)^2 = (1\hspace{.05in} 3 \ldots (n-3) \hspace{.05in}(n-1))(2\hspace{.05in} 4 \ldots (n-2) \hspace{.05in}n)$, \indent which is
a $1$-derangement in $S_n$, for all $n$. Additionally,
$(n\hspace{.05in}(n-1)\ldots 1)^2(1\hspace{.05in}2) =
(1\hspace{.05in}n\hspace{.05in}(n-2)\hspace{.05in}(n-4)\ldots 2\hspace{.05in}(n-1)\hspace{.05in}(n-3)\ldots 3),$ which is also a
$1$-derangement in $S_n$, for any $n$.

On the other hand, if $n$ is odd, then
$(1\hspace{.05in}2\hspace{.05in}\ldots n)^2 = (1\hspace{.05in}3\ldots (n-2)\hspace{.05in}n\hspace{.05in}2\hspace{.05in}4 \ldots (n-3)\hspace{.05in}(n-1))$, which is
a $1$-derangement in $S_n$ for all $n$. And
$(n\hspace{.05in}(n-1)\ldots1)^2(1\hspace{.05in}2) = 
(n\hspace{.05in}(n-2) \hspace{.05in}(n-4) \ldots 3\hspace{.05in}1\hspace{.05in}(n-1)\hspace{.05in}(n-3)\ldots 4\hspace{.05in}2)(1\hspace{.05in}2) =
(1\hspace{.05in}n\hspace{.05in}(n-2)\hspace{.05in}(n-4)\ldots 3)(2\hspace{.05in}(n-1)\hspace{.05in}(n-3)\ldots 4)$, which is a
$1$-derangement in $S_n$ so long as $n > 3$. (If $n = 3$, 
$(312)(12) = (13)(2)$, which is not a 1-derangement.)

Thus for $n > 3$, 
we have shown that
$(1\hspace{.05in}2)$ can be written as the product of two $1$-derangements, and,
by extension, every adjacent
transposition can be written as the product of two 1-derangments.

\vspace{.25cm}
\noindent {Case 2:} For $k \geq 2$, $(1\hspace{.05in}2) =
(1\hspace{.05in}2\ldots n)^{-1}(1\hspace{.05in}3\hspace{.05in}4\ldots n)$.  We know $(1\hspace{.05in}2\ldots n)^{-1}$ is a
$k$-derangement for all $k$ since the inverse of a $k$-derangement
is a $k$-derangement.  And, by the cycle structure, we see that $(1\hspace{.05in}3\hspace{.05in}4\ldots n) = (1\hspace{.05in}3\hspace{.05in}4\ldots n)(2)$ is a
$k$-derangement for all $k$, except $k=1$ and $k=(n-1)$(However, since $\Gamma_{1,n} = \Gamma_{(n-1),n}$, Case 1 addresses
$(n-1)$-derangements as well as $1$-derangements). 

So we have shown that for $k \geq 2$, $(1\hspace{.05in}2)$ can be written as
the product of two $k$-derangements, and again, by extension,
we can write any adjacent transposition
as the product of two $k$-derangements.  Thus every vertex is connected by a path to the identity, and $\Gamma_{k,n}$ is connected.

\end{proof}
 \end{theorem}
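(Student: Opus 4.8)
The plan is to use the standard fact that a Cayley graph $\Gamma(G,S)$ is connected precisely when $S$ generates $G$; thus it suffices to show that $\mathcal{D}_{k,n}$ generates $S_n$ whenever $n>3$ and $k<n$. Since $S_n$ is generated by the adjacent transpositions $(h\ (h+1))$, and since by the Proposition the property of being a $k$-derangement depends only on cycle structure — a quantity invariant under conjugation, i.e.\ under relabeling of $[n]$ — it is enough to exhibit a single adjacent transposition as a product of $k$-derangements. I would work throughout with $(1\ 2)$, write it as a product of two $k$-derangements, and then transport the factorization to an arbitrary $(h\ (h+1))$ by relabeling.

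For the generic range $2\le k\le n-2$ I would use the factorization $(1\ 2)=(1\ 2\ \cdots\ n)^{-1}\,(1\ 3\ 4\ \cdots\ n)$. The full $n$-cycle has cycle structure $\{n\}$, and since $k<n$ no subcollection of its cycle lengths can sum to $k$; by the Proposition it (and its inverse) is a $k$-derangement for every $k<n$. The second factor $(1\ 3\ 4\ \cdots\ n)$ fixes $2$ and so has cycle structure $\{n-1,1\}$, whose proper subset sums are $1$ and $n-1$; hence by the Proposition it is a $k$-derangement exactly when $k\neq 1,n-1$. This covers $2\le k\le n-2$, and the value $k=n-1$ needs no separate treatment because $\Gamma_{k,n}=\Gamma_{(n-k),n}$ reduces it to the case $k=1$.

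The remaining, and genuinely harder, case is $k=1$ (equivalently $k=n-1$). Here the previous factorization is unavailable: a $1$-derangement is an ordinary derangement, so both factors must be fixed-point-free, and the $(n-1)$-cycle above is not. The plan is instead to write $(1\ 2)$ as a product of two fixed-point-free permutations, for which a convenient choice takes one factor to be the square of an $n$-cycle and the other to be whatever the equation forces. Concretely I would set $\alpha=(1\ 2\ \cdots\ n)^2$ and $\beta=(n\ (n-1)\ \cdots\ 1)^2(1\ 2)$ so that $\alpha\beta=(1\ 2)$, and then verify that both are derangements. The cycle structure of $\alpha$ splits by parity — $\{n\}$ when $n$ is odd and $\{n/2,n/2\}$ when $n$ is even — and in either case has no fixed point; the computation for $\beta$ is the delicate one, and I would carry it out by parity as well, reading off its cycle decomposition explicitly.

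The main obstacle is exactly this $k=1$ analysis. Unlike the generic case, where a fixed point in one factor is harmless, here there is no slack, and confirming that the complementary factor $\beta$ has no fixed point is where the hypothesis $n>3$ enters essentially: the construction breaks down at $n=3$, consistent with $\Gamma_{1,3}$, and hence every $\Gamma_{k,3}$, being disconnected. I would therefore treat the two parities of $n$ separately, compute the cycle type of $\beta$ in each, and check that it avoids $1$-cycles for all $n>3$; once this is done, the reduction to $(1\ 2)$ and the relabeling argument of the first paragraph assemble the full result routinely.
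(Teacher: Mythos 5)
Your proposal follows essentially the same route as the paper's proof: the same reduction to adjacent transpositions via conjugation-invariance of cycle structure, the same factorization $(1\ 2)=(1\ 2\cdots n)^{-1}(1\ 3\ 4\cdots n)$ for $2\le k\le n-2$ with the $k=n-1$ case folded into $k=1$ via $\Gamma_{k,n}=\Gamma_{(n-k),n}$, and the same choice $\alpha=(1\ 2\cdots n)^2$, $\beta=(n\ (n-1)\cdots 1)^2(1\ 2)$ for the $k=1$ case, verified by parity of $n$. The paper simply carries out explicitly the cycle-type computation for $\beta$ that you defer, confirming it is fixed-point-free exactly when $n>3$.
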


It is worth noting that Theorem 1 holds for $n = 2$ as well. Since we are only interested in
$k$-derangements in $S_n$ such that $k < n$, when $n=2$, $k$ must equal 1, and so
$\Gamma_{1,2}$ is the connected graph on two vertices.

Next, we give a characterization in terms of $n$ and $k$ for when a derangement graph is Eulerian.  We will require the following result.

\begin{lemma} If a cycle structure includes a cycle of length greater than 2, then there are an even number of permutations
with that cycle structure.
\label{lem:eulerian}
\end{lemma}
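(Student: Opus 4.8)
The plan is to exhibit a fixed-point-free involution $\phi$ on $P_r$, the set of permutations with the given cycle structure $r$. The existence of such a map forces $|P_r|$ to be even, since it partitions $P_r$ into orbits $\{\sigma,\phi(\sigma)\}$ of size exactly two. The involution is built by canonically selecting one long cycle of each permutation and reversing it.

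First I would invoke the hypothesis: $r$ contains a part $\ell > 2$, so every $\sigma \in P_r$ has at least one cycle of length greater than $2$. Among all elements of $[n]$ that lie in a cycle of $\sigma$ of length greater than $2$, let $m = m(\sigma)$ be the smallest such element, and let $C$ be the unique cycle of $\sigma$ containing $m$. Define $\phi(\sigma)$ to be the permutation obtained from $\sigma$ by replacing $C$ with its inverse cycle (reversing the order in which its elements are traversed) while leaving every other cycle of $\sigma$ unchanged.

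Next I would verify the three required properties. (i) $\phi(\sigma) \in P_r$: reversing a cycle yields a cycle of the same length, so the cycle structure is unchanged. (ii) $\phi$ is an involution: reversing $C$ alters neither the partition of $[n]$ into cycle-supports nor any cycle length, so the set of elements lying in cycles of length greater than $2$ is the same for $\sigma$ and $\phi(\sigma)$; hence $m(\phi(\sigma)) = m(\sigma) = m$, and a second application of $\phi$ reverses the very same cycle, recovering $\sigma$. (iii) $\phi$ is fixed-point-free: a cycle $(a_1\, a_2 \cdots a_\ell)$ with $\ell \geq 3$ is never equal to its reverse $(a_1\, a_\ell\, a_{\ell-1} \cdots a_2)$, since equality would require $a_2 = a_\ell$ with $2 \neq \ell$, contradicting the distinctness of the entries. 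Thus $\phi(\sigma) \neq \sigma$ for every $\sigma$.

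Having confirmed these three points, the conclusion is immediate: $\phi$ pairs off the elements of $P_r$, so $|P_r|$ is even. I expect the only genuine subtlety to lie in property (ii) — one must guarantee that the \emph{same} cycle is selected by $\phi$ before and after reversal, so that $\phi$ truly has order $2$ rather than merely shuffling the marked cycle among several candidates. Anchoring the choice to the globally smallest element lying in a cycle of length greater than $2$ is exactly the device that makes this invariance hold, and the restriction $\ell > 2$ in property (iii) is where the hypothesis of the lemma is essential (for $\ell = 2$ a transposition equals its own reverse and the pairing collapses).
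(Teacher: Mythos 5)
Your proof is correct. The paper's argument rests on the same underlying mechanism --- a fixed-point-free involution on $P_r$ --- but uses the simpler map $\sigma\mapsto\sigma^{-1}$: inversion clearly preserves cycle structure and is automatically an involution, and a fixed point would satisfy $\sigma^2=e$, forcing every cycle of $\sigma$ to have length at most $2$, contradicting the hypothesis. Your variant reverses only a single, canonically selected long cycle rather than inverting the whole permutation. This works equally well, and your verification of all three properties is sound --- in particular, you correctly identified that the delicate point is ensuring the same cycle is selected before and after reversal, which your anchor to the smallest element in a long cycle handles. The trade-off is that the global inverse sidesteps that canonical-choice issue entirely, while your local reversal makes it more transparent exactly where the ``length greater than $2$'' hypothesis is consumed (a transposition equals its own reverse, so the pairing would collapse for $\ell=2$). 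Either involution yields the partition of $P_r$ into two-element orbits and hence the evenness of $|P_r|$.
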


\begin{proof} Consider $P_r$, the set of permutations with a given cycle structure, $r$. We can  pair each $\sigma \in P_r$
 with its inverse $\sigma^{-1} \in P_r$, and so long as $\sigma \neq \sigma^{-1}$ for any $\sigma \in P_r$, $|P_r|$ will be even.  Suppose there exists a $\sigma \in P_r$ such that
  $\sigma = \sigma^{-1}$.  Then $\sigma^2 = e$, and so the order of $\sigma$ is at most 2.   
 The order of a permutation is the least common multiple of the orders of the elements of its cycle structure, so  $\sigma$ must not include a cycle of length greater than 2.  This is a contradiction; thus $|P_r|$ is even.
 \end{proof}

\begin{theorem}  For $n>3$ and $k<n$, $\Gamma_{k,n}$ is Eulerian if and only if $k$ is even or $k$ and $n$ are both odd.
\label{thm:eulerian}

\begin{proof} A graph is Eulerian if and only if it is connected and each vertex has an even degree.  In light of Theorem \ref{thm:connected} and the previously noted fact that
$\Gamma_{k,n}$ is $D_k(n)$-regular, in order to
ascertain if $\Gamma_{k,n}$ is Eulerian, we must determine whether
$D_k(n)$ is even or odd. 

If $k$ is even, we claim that $D_k(n)$ is the sum of even
numbers. Any cycle structure composed entirely of 2- or 1- cycles
will partition an even $k$, and thus any permutation which is in
$\mathcal{D}_{k,n}$ for an even $k$ will contain a cycle of length 3
or greater in its cycle decomposition. Now, $\mathcal{D}_{k,n} =
{P_{r_1}} \dot \cup P_{r_2} \cdots \dot \cup P_{r_m}$ such that no $r_i$
partitions $k$, and by Lemma \ref{lem:eulerian}, $|P_{r_i}|$
is even for all $i \in \{1,\ldots,m\}$. Thus, when $k$ is even,
$D_k(n)$ is even.

If $k$ and $n$ are both odd, again we see that every permutation in $\mathcal{D}_{k,n}$ will contain a cycle of length 3 or greater in its cycle decomposition, since an odd $k$ can be partitioned by a set of cycles of lengths 1 or 2 if there is at least one 1-cycle.  Furthermore, since $n$ is odd, there are no permutations whose cycle structure is composed only of length-2 cycles.  Thus, $D_k(n)$ is even.

Finally, we show that if $k$ is odd and $n$ is even, then $\Gamma_{k,n}$ is not Eulerian.  In this case,  $P_{\{2,2,\ldots,2\}}$ is in $CD_{ k,n}$.  By choosing pairs of elements for the cycles and dividing by the number of ways to order the cycles, we see that the number of permutations in $
P_{\{2,2,\ldots,2\}}$ is given by

$$\frac{\binom{n}{2} \binom{n-2}{2} \cdots \binom{2}{2}}{(\frac{n}{2}) !}= \frac{n(n-1)(n-2)\cdots(3)(2)(1)}{(2\cdot\frac{n}{2})(2\cdot(\frac{n}{2}-1))\cdots(6)(4)(2)} =$$

$$ \frac{n(n-1)(n-2)\cdots(3)(2)(1)}{n(n-2)\cdots(6)(4)(2)} = (n-1)(n-3)\cdots(5)(3)(1).$$

Since $n$ is even, the product $(n-1)(n-3)\cdots(5)(3)(1)$ is odd.  
Every other $k$-derangement in $S_n$ will contain a cycle
with length greater than 2, since any combination of 1-cycles or 1-
and 2-cycles will partition $k$.  So $D_k(n)$ is the sum of one
odd number and even numbers, and so is odd.  
\end{proof}
\end{theorem}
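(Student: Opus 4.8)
The plan is to reduce the Eulerian question to a single parity computation and then isolate exactly which cycle structures can force that parity to be odd. Recall that a connected graph is Eulerian precisely when every vertex has even degree. Theorem~\ref{thm:connected} already supplies connectivity for $n>3$ and $k<n$, and $\Gamma_{k,n}$ is $D_k(n)$-regular, so the entire problem collapses to the parity of the single integer $D_k(n)$. I would therefore open by recording that, under these hypotheses, $\Gamma_{k,n}$ is Eulerian if and only if $D_k(n)$ is even, and devote the remainder of the argument to that number.

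Next I would decompose $\mathcal{D}_{k,n} = \bigsqcup_r P_r$, the union taken over those cycle structures $r \vdash n$ containing no sub-multiset whose parts sum to $k$; by the cycle-structure characterization (Proposition~1), these are exactly the cycle structures realized by $k$-derangements. By Lemma~\ref{lem:eulerian}, $|P_r|$ is even whenever $r$ contains a cycle of length at least $3$, so such terms never influence the parity of $D_k(n)$. Hence the parity of $D_k(n)$ is controlled entirely by the $k$-derangement cycle structures built solely from $1$- and $2$-cycles, say $j$ transpositions together with $n-2j$ fixed points.

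The heart of the argument is then a short subset-sum observation, which I expect to be the main obstacle since it is the one place requiring care that no gaps appear among the attainable sums. If such a structure retains at least one fixed point (that is, $n-2j \ge 1$), then selecting $a$ of the transpositions and $b$ of the fixed points realizes the value $2a+b$ for all $0 \le a \le j$ and $0 \le b \le n-2j$; because a fixed point lets us nudge the running total by $1$, these values cover every integer from $0$ to $n$. In particular $k$ occurs as a subset sum, so the structure partitions $k$ and is \emph{not} a $k$-derangement. Consequently the only all-$\{1,2\}$ structure that can survive is the all-transposition structure $\{2,2,\ldots,2\}$, which exists only when $n$ is even, whose subset sums are exactly the even integers in $[0,n]$, and which is therefore a $k$-derangement precisely when $k$ is odd.

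Finally I would count: $|P_{\{2,2,\ldots,2\}}| = (n-1)(n-3)\cdots 3\cdot 1$, a product of odd numbers and hence odd, as the excerpt's computation confirms. Assembling the pieces, $D_k(n)$ is odd exactly when this lone odd term is present, namely when $n$ is even and $k$ is odd; in every other case each $P_r$ contributes an even count and $D_k(n)$ is even. Negating this condition, $D_k(n)$ is even if and only if $k$ is even or $n$ is odd, which, since the subcase $k$ even already absorbs the even-$k$, odd-$n$ situation, is equivalent to ``$k$ is even, or $k$ and $n$ are both odd.'' Combined with connectivity, this yields the stated characterization.
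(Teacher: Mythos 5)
Your proposal is correct and follows essentially the same route as the paper: reduce to the parity of $D_k(n)$ via connectivity and regularity, use Lemma \ref{lem:eulerian} to discard every cycle structure containing a cycle of length at least $3$, and identify the all-transposition class (of odd size $(n-1)(n-3)\cdots 1$) as the unique possible odd contributor, present exactly when $n$ is even and $k$ is odd. The only difference is organizational: you merge the paper's two even-degree cases ($k$ even; $k$ and $n$ both odd) into a single subset-sum observation that any structure of $1$- and $2$-cycles with at least one fixed point realizes every sum in $[0,n]$, including $k$.
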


\section{Chromatic, independence and clique numbers for  $k=2$ and $n$ an odd prime power}

For the majority of this section, we will think of permutations in terms of the result of their application to the ordering $\{1,2,3,\ldots, n\}$.  Thus, $\{2,3,1,4,5\}$ represents the permutation which has moved 2 to the first position, 3 to the second, 1 to the third, and left 4 and 5 fixed; that is, the permutation $(132)(4)(5)$ in cycle notation, or the inverse of the permutation $\binom{12345}{23145}$, in two line notation.

We note that in order for $vu^{-1} $ (or equivalently, $v^{-1}u$) to be a $k$-derangement, it is necessary and 
	sufficient that no unordered $k$-tuple of elements be sent to the same unordered 
	$k$-tuple of positions by both $u$ and $v$.  For example, the permutation $u=\{2,3,1,4,5\}$ and $v=\{4,1,3,5,2\}$ both send the pair $\{1,3\}$ to the second and third positions.  Thus $(vu^{-1})_{(2)}(\{2,3\}) = \{2,3\}$, and so $vu^{-1}$ is not a 2-derangement and there is no edge between $u$ and $v$ in the 2-derangement graph.  
	More formally, suppose $u$ and $v$ both send the $k$-tuple  $M'=\{a_1',a_2', \ldots, a_k'\}$ to positions $M=\{a_1 ,a_2, \ldots, a_k \}$.  Then, $(vu^{-1})_{(k)}(M) = v_{(k)}(M') = M$.  Thus, $vu^{-1}$ is not a $k$-derangement.

On the other hand, if $u$ and $v$ send no $k$-tuple to the same positions we claim $vu^{-1}$ is a $k$-derangement.  Consider an arbitrary $k$-tuple, $M=\{a_1,a_2, \ldots, a_k\}$, and suppose $u$ maps the $k$-tuple $M'=\{a_1',a_2', \ldots, a_k'\}$ to the positions given in $M$.  Then $(vu^{-1})_{(k)}(M) =  v_{(k)}(M') \neq M$ since $v$ cannot send the $k$-tuple $M'$ to the same positions as $u$ does.  
Thus, $vu^{-1}$ is a $k$-derangement.

In Theorem \ref{thm:clique}, we find the clique number of the 2-derangement graph, $\omega(\Gamma_{2,n})$, for $n$ an odd prime power, by constructing a clique of maximal size.  
Before establishing this clique number, we note an upper bound on the clique number of a general $k$-derangement graph.

\begin{lemma}
For $k<n$,
$\omega(\Gamma_{k,n})\le\binom{n}{k}$.
\label{lem:clique}
\end{lemma}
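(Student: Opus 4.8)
The plan is to exploit the combinatorial characterization of adjacency established in the paragraphs just before this lemma: two permutations $u$ and $v$ are joined by an edge in $\Gamma_{k,n}$ precisely when they send no $k$-tuple of elements to the same $k$-tuple of positions, equivalently when $u_{(k)}(M') \neq v_{(k)}(M')$ for every $M' \in [n]^{(k)}$. A clique is a set of permutations that are pairwise adjacent, so within a clique every pair of distinct permutations must disagree on the image of \emph{every} $k$-tuple; in particular they must disagree on the image of any single fixed $k$-tuple. This last observation is the whole engine of the bound.

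First I would fix one reference $k$-subset, say $M_0 = \{1,2,\ldots,k\}$, and define a map $\phi$ from the vertex set of a clique $C$ into $[n]^{(k)}$ by $\phi(u) = u_{(k)}(M_0)$, the $k$-tuple of positions to which $u$ sends $M_0$. The key step is to verify that $\phi$ is injective on $C$. Suppose $u,v \in C$ satisfy $\phi(u) = \phi(v)$; then $u$ and $v$ both send $M_0$ to the same positions, so by the characterization above $vu^{-1}$ is not a $k$-derangement. Concretely, writing $N = u_{(k)}(M_0) = v_{(k)}(M_0)$, one computes $(vu^{-1})_{(k)}(N) = v_{(k)}\!\left((u_{(k)})^{-1}(N)\right) = v_{(k)}(M_0) = N$, so $(vu^{-1})_{(k)}$ fixes $N$. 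Hence there is no edge between $u$ and $v$; since distinct vertices of a clique are always adjacent, this forces $u = v$, establishing injectivity.

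With injectivity in hand the conclusion is immediate: $|C| = |\phi(C)| \le |[n]^{(k)}| = \binom{n}{k}$, and taking the supremum over all cliques gives $\omega(\Gamma_{k,n}) \le \binom{n}{k}$. I do not expect a serious obstacle in this argument, as it is a direct translation of the clique condition through the single-tuple test. The only point requiring care is the logical direction: adjacency in a clique guarantees disagreement on all $k$-tuples, so restricting attention to the one fixed tuple $M_0$ is certainly enough to separate distinct clique members, which is all that the upper bound requires.
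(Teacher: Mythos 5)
Your proof is correct and is essentially the same argument the paper gives: the paper also bounds the clique size by noting that each permutation in a clique must send a fixed $k$-tuple of elements to a distinct one of the $\binom{n}{k}$ possible $k$-tuples of positions. You have simply made the injectivity of this assignment explicit, which the paper leaves informal.
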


\begin{proof}
The clique number of the $k$-derangement graph, $\omega(\Gamma_{k,n})$ cannot be greater than
$\binom{n}{k}$, since there are only $\binom{n}{k}$ subsets of size
$k$ and hence at most $\binom nk$ different unordered $k$-tuples of positions for an arbitrary $k$-tuple of elements to be sent under a permutation.
\end{proof}

\begin{theorem} If $n$ is an odd prime power, then $\omega(\Gamma_{2,n})=\binom{n}{2}$.
\label{thm:clique}
\end{theorem}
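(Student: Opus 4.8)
The upper bound $\omega(\Gamma_{2,n}) \le \binom{n}{2}$ is already supplied by Lemma~\ref{lem:clique}, so the entire task is to exhibit a clique of size exactly $\binom{n}{2}$. Recall from the discussion preceding the theorem that, writing permutations as functions, two permutations $u,v$ are adjacent in $\Gamma_{2,n}$ precisely when $\{u(i),u(j)\}\neq\{v(i),v(j)\}$ for every pair $\{i,j\}\subseteq[n]$; I will call a violation of this (a pair on which the two induced maps agree) a \emph{collision}, so that a clique is exactly a collision-free family of permutations. Since $n$ is an odd prime power, I would identify $[n]$ with the finite field $\mathbb{F}_n$ and take as candidate vertices the affine permutations $\phi_{a,b}\colon x\mapsto ax+b$ with $a\in\mathbb{F}_n^{*}$ and $b\in\mathbb{F}_n$; there are $n(n-1)$ of these, and I will carve a clique of the right size out of them.

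The computational heart of the argument is to decide when two distinct affine maps $\phi_{a,b}$ and $\phi_{c,d}$ collide, i.e.\ when $\{ai+b,aj+b\}=\{ci+d,cj+d\}$ for some $i\neq j$. Matching the two sets gives two cases. If $ai+b=ci+d$ and $aj+b=cj+d$, subtracting yields $(a-c)(i-j)=0$, which forces $a=c$ and then $b=d$, so the maps coincide. If instead $ai+b=cj+d$ and $aj+b=ci+d$, subtracting yields $(a+c)(i-j)=0$, and since $i\neq j$ this forces $c=-a$. I expect this case analysis to be the main obstacle, and it is exactly where oddness enters: when $c=a$ (same slope, distinct intercepts) the swapped case would require $2a(i-j)=0$, which is impossible because $2\neq0$ in $\mathbb{F}_n$ and $a\neq0$. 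The upshot I am after is that two distinct affine maps collide if and only if their slopes are negatives of one another; for the lower bound I really only need the easy direction, namely that maps whose slopes are equal, or are neither equal nor negatives, never collide.

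To assemble the clique I would use that $n$ is odd, so $-1\neq1$ and the involution $a\mapsto-a$ has no fixed point on $\mathbb{F}_n^{*}$; hence $\mathbb{F}_n^{*}$ partitions into exactly $(n-1)/2$ two-element classes $\{a,-a\}$. Choosing a transversal $R$ consisting of one slope from each class, I would set
$$\mathcal{C}=\{\phi_{a,b}\ :\ a\in R,\ b\in\mathbb{F}_n\}.$$
By construction no two distinct slopes in $R$ are negatives of one another, so by the collision criterion any two distinct members of $\mathcal{C}$, whether they share a slope or not, are collision-free; thus $\mathcal{C}$ is a clique. Its size is $|R|\cdot n=\tfrac{n-1}{2}\cdot n=\binom{n}{2}$, which meets the upper bound of Lemma~\ref{lem:clique} and therefore gives $\omega(\Gamma_{2,n})=\binom{n}{2}$.
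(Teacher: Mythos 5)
Your proposal is correct and takes essentially the same approach as the paper: both identify $[n]$ with the field $\mathbb{F}_n$, form the clique from the affine maps $x\mapsto ax+b$ with slopes drawn from a transversal of the pairs $\{a,-a\}$ in $\mathbb{F}_n^{*}$ and all intercepts $b$, and rule out collisions by the same two-case subtraction argument yielding $(a-c)(i-j)=0$ or $(a+c)(i-j)=0$. The only cosmetic difference is that you state the collision criterion as an ``if and only if'' heuristic before correctly noting that only the easy direction is needed.
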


\begin{proof}
We will explicitly construct a clique with $\binom{n}{2}$ elements. Let $n=p^k$ with $p$ a prime greater than 2, and let $\mathbb{F}_{p^k}$ denote the field with $p^k$ elements. Rather than letting $S_n$ act on $[n]$, we will let it act on $\mathbb{F}_{p^k}$ and construct $\Gamma_{2,n}$ accordingly. Let $v=(x_1,\ldots,x_n)$ be an ordered $n$-tuple whose entries are the elements of $\mathbb{F}_{p^k}$ in some order. Given any function $\phi:\mathbb{F}_{p^k}\rightarrow\mathbb{F}_{p^k}$, we define $\phi(v)=(\phi(x_1),\ldots,\phi(x_n))$.  Partition the non-zero elements of $\mathbb{F}_{p^k}$ by pairing each element with its (additive) inverse, and let $T$ be a set obtained by choosing exactly one element from each pair, giving $|T|=(p^k - 1)/2$.  

Define $f_{s,\alpha}(x)=sx+\alpha$, and consider the set $X=\lbrace f_{s,\alpha}(v)|s\in T\mbox{ and }  \alpha\in\mathbb{F}_{p^k}\rbrace$. Since $s\neq 0$, $f_{s,\alpha}$ is a bijection and $f_{s,\alpha}(v)$ is a permutation of the elements of $\mathbb{F}_{p^k}$. We claim that $X$ is a clique in $\Gamma_{2,n}$. Suppose not; that is, suppose there are $s,t\in T$ and $\alpha,\beta\in\mathbb{F}_{p^k}$, $(s,\alpha)\neq(t,\beta)$, such that $f_{s,\alpha}(v)$ is not a 2-derangement of $f_{s,\beta}(v)$. In that case there exist $x,y\in\mathbb{F}_{p^k}$, $x\neq y$, such that either $f_{s,\alpha}(x)=f_{t,\beta}(x)$ and $f_{s,\alpha}(y)=f_{t,\beta}(y)$ or $f_{s,\alpha}(x)=f_{t,\beta}(y)$ and $f_{s,\alpha}(y)=f_{t,\beta}(x)$.  In the first case, subtracting the two equations and rewriting yields $(s-t)(x-y)=0$. If $s=t$, then $\alpha=\beta$ giving a contradition. If $s\neq t$, then $x=y$ and again we have a contradiction.  In the second case, subtracting and rewriting yields $(s+t)(x-y)=0$ and since $s+t\neq 0$ for $s,t\in T$, $x=y$ and this also give a contradiction.  Thus, $X$ is a clique of size $p^k ((p^k-1)/2) = \binom n2$.
\end{proof}

The next example illustrates the construction when $n=7$.  
\begin{example}
We build a clique of size $\binom 7 2$ in the derangement graph $\Gamma_{2,7}$ consisting of $\frac{7-1}{2}$ blocks, each of which contains 7 permutations.
We let $v = (1,2,3,4,5,6,7)$ (writing 7 instead of 0) and take $T = \{1,4,5\}$. Then$ f_{1,0}(v) = (1,2,3,4,5,6,7)$, $f_{4,0} (v)= (4,1,5,2,6,3,7)$, and $f_{5,0} (v)= (5,3,1,6,4,2,7)$. 
Increasing $\alpha$ from 0 cyclically permutes the 7-tuples.
Block 1 consists of the arrangements $\{f_{1,\alpha}(v)| \alpha\in \mathbb{F}_7\}$, that is the arrangement
 $(1, 2, 3, 4, 5, 6, 7)$ and the remaining 6 rotations of this arrangement (e.g., $(2,3,4,5,6,7,1)$, $(3,4,5,6,7,1,2)$, etc.).  Block 2 consists of the arrangement $f_{4,0}(v)$ along with all of its rotations.  Finally, Block 3 consists of $f_{5,0}(v)$ and its rotations.  
To see that these permutations form a clique, consider, for example, the pair $\{1,2\}$.  These elements are one position apart in block 1, two positions apart in block 2 and three positions apart in block 3 (counting the shortest distance between them either forwards or backwards).  So the pair $\{1,2\}$ cannot occupy the same positions in two permutations which appear in different blocks.  Furthermore, within a block, the rotations insure that the pair never occupies the same positions.  
\label{ex:clique}
\end{example}

Next we turn to the independence number $\alpha(\Gamma_{k,n})$ and the chromatic number $\chi(\Gamma_{2,n})$ of the $k$-derangement graph.  We will require the following lemma 
which has been adapted from Frankl and Deza's lemma \cite{FD} and applied to $k$-tuples of elements.

\begin{lemma}
For $k<n$, $\alpha(\Gamma_{k,n})\omega(\Gamma_{k,n})\le n!$
\label{lem:UpperBound}
\end{lemma}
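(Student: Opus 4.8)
The plan is to exploit the fact that $\Gamma_{k,n}$ is a Cayley graph, and hence vertex-transitive, to obtain the ``clique--coclique'' bound $\alpha\omega\le n!$ by a direct double-counting argument, avoiding the eigenvalue machinery of the original Frankl--Deza statement.

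First I would verify that right multiplication supplies a transitive (indeed regular) group of automorphisms. For $g\in S_n$, the map $\rho_g:S_n\to S_n$ given by $\rho_g(u)=ug$ is a graph automorphism of $\Gamma_{k,n}$: two vertices $u,v$ are adjacent precisely when $vu^{-1}\in\mathcal{D}_{k,n}$, and since $(vg)(ug)^{-1}=vu^{-1}$, the pair $ug,vg$ is adjacent exactly when $u,v$ is. Because $\rho_{c^{-1}i}$ sends any prescribed $c$ to any prescribed $i$, and does so uniquely, this action is regular on the $n!$ vertices. Second, let $C$ be a clique with $|C|=\omega(\Gamma_{k,n})$ and $I$ an independent set with $|I|=\alpha(\Gamma_{k,n})$. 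For each $g\in S_n$ the translate $Cg=\{cg:c\in C\}$ is again a clique, and a clique meets an independent set in at most one vertex, so $|Cg\cap I|\le 1$ for every $g$.

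Third, I would count $\sum_{g\in S_n}|Cg\cap I|$ in two ways. Summing the bound from the previous step gives $\sum_{g}|Cg\cap I|\le n!$. On the other hand,
\[
\sum_{g\in S_n}|Cg\cap I|=\sum_{c\in C}\sum_{i\in I}\bigl|\{g\in S_n:cg=i\}\bigr|=\sum_{c\in C}\sum_{i\in I}1=\omega(\Gamma_{k,n})\,\alpha(\Gamma_{k,n}),
\]
since for each pair $(c,i)$ the equation $cg=i$ has the unique solution $g=c^{-1}i$. Combining the two expressions yields $\alpha(\Gamma_{k,n})\,\omega(\Gamma_{k,n})\le n!$.

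There is no serious obstacle here: the whole argument rests on vertex-transitivity, which is automatic for a Cayley graph, together with the elementary observation that cliques and independent sets overlap in at most one vertex. The only point requiring a moment's care is confirming that the group action is by graph automorphisms---that is, that translating a clique produces a clique---which follows from the identity $(vg)(ug)^{-1}=vu^{-1}$ and does not even require $\mathcal{D}_{k,n}$ to be closed under conjugation. If one instead wished to follow the ratio-bound route of Frankl and Deza \cite{FD}, the main work would shift to computing the least eigenvalue of the adjacency matrix of $\Gamma_{k,n}$; the combinatorial route above sidesteps that entirely.
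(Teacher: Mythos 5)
Your proof is correct, and it reaches the bound by a slightly different route than the paper. The paper follows Frankl and Deza directly: it takes an independent set $\sP$ and a clique $\sQ$ and shows that the multiplication map $(P,Q)\mapsto PQ$ is injective on $\sP\times\sQ$, by observing that $P_1Q_1=P_2Q_2$ forces $P_1^{-1}P_2=Q_1Q_2^{-1}$, and the left side fixes some unordered $k$-tuple while the right side can only do so if $Q_1=Q_2$. You instead invoke the general clique--coclique bound for vertex-transitive graphs: right translation is a regular group of automorphisms, each translate $Cg$ of a maximum clique meets a maximum independent set $I$ in at most one vertex, and double counting the incidences gives $|C|\,|I|\le n!$. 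These are really two presentations of the same count --- your statement that $\sum_{g}|Cg\cap I|=|C|\,|I|$ with every term at most $1$ is exactly the assertion that $(c,i)\mapsto c^{-1}i$ is injective on $C\times I$, which is the paper's injectivity claim after inverting. What your version buys is generality and modularity: it works verbatim for any vertex-transitive graph and never needs to unwind what adjacency in $\Gamma_{k,n}$ actually means, whereas the paper's version is self-contained and makes visible precisely where the fixed $k$-tuple enters. The one point you were right to flag is that right translation is an automorphism because $(vg)(ug)^{-1}=vu^{-1}$; with that checked, the argument is complete.
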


\begin{proof}
Let $\sP$ be a set of permutations in $S_n$, every pair of which has at least one unordered $k$-tuple of elements in the same unordered $k$-tuple of positions.   That is, for any $u,v \in \sP$, there exists a set $M=\{a_1, \ldots,a_k\} \subseteq [n]$ such that $\left(v^{-1}u\right)_{(k)}(M) = M$.
Note that  $\sP$ is an independent set in the $k$-derangement graph.  
Let $\sQ$ be a set of permutations in $S_n$ such that each pair of permutations has no $k$-tuple of elements in the same positions; that is, $\sQ$ is a clique in the $k$-derangement graph.  We claim that products of the form $PQ$ with $P \in \sP$ and $Q \in \sQ$ give distinct permutations of $n$.  Suppose, for the sake of contradiction, that $P_1 Q_1 = P_2Q_2$ for $P_1, P_2 \in \sP$ and $Q_1, Q_2 \in \sQ$ with $P_1 \neq P_2$ and $Q_1 \neq Q_2$.  This implies that $P_1^{-1}P_2 = Q_1Q_2^{-1}$.  Now, since $P_1$ and $P_2$ are in $\sP$, there is a $k$-tuple of elements $M=\{a_1, \ldots,a_k\}$ such that $\left(P^{-1}_1P_2\right)_{(k)}(M) = M$.
However, this implies $\left(Q_1Q_2^{-1}\right)_{(k)}(M) = M$.  But we know that the permutations in $\sQ$ agree on no $k$-tuples, and so we must have $Q_1 =Q_2$ and hence, $P_1 = P_2$.    Finally, since each product gives a unique permutation of $n$, there can be no more than $n!$ such products. 
\end{proof}

\begin{theorem}
\label{thm:AlphaChi}
For $k<n$, $\alpha(\Gamma_{k,n})\ge k!(n-k)!$ and $\chi(\Gamma_{k,n})\le\binom{n}{k}$.
\end{theorem}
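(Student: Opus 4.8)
The plan is to extract both inequalities from a single algebraic object: the setwise stabilizer
\[
H=\{\sigma\in S_n:\sigma(\{1,\dots,k\})=\{1,\dots,k\}\},
\]
which is the copy of $S_k\times S_{n-k}$ sitting inside $S_n$ and therefore a subgroup of order $|H|=k!(n-k)!$. The point is that $H$ is tailor-made to be an independent set, and its cosets then furnish a coloring.

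For the independence bound I would first verify that $H$ itself is independent in $\Gamma_{k,n}$. Recall that an edge joins $u$ and $v$ exactly when $vu^{-1}\in\mathcal{D}_{k,n}$. If $u,v\in H$, then $vu^{-1}$ again lies in the subgroup $H$, so $(vu^{-1})_{(k)}$ carries the $k$-tuple $\{1,\dots,k\}$ back to itself; by the very definition of a $k$-derangement this forces $vu^{-1}\notin\mathcal{D}_{k,n}$, so $u$ and $v$ are non-adjacent. Hence $H$ is an independent set and $\alpha(\Gamma_{k,n})\ge|H|=k!(n-k)!$.

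For the chromatic bound I would color $S_n$ by the right cosets $Hg$. There are exactly $[S_n:H]=n!/(k!(n-k)!)=\binom{n}{k}$ of these, and they partition the vertex set. Each right coset is independent for the same reason as above: writing $u=h_1g$ and $v=h_2g$ with $h_1,h_2\in H$, one computes $vu^{-1}=h_2h_1^{-1}\in H$, which is not a $k$-derangement, so $u$ and $v$ are non-adjacent. Assigning a distinct color to each right coset is then a proper coloring using $\binom{n}{k}$ colors, giving $\chi(\Gamma_{k,n})\le\binom{n}{k}$.

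The one place that genuinely requires care is the left/right distinction imposed by the Cayley-graph convention $su=v$, i.e.\ $vu^{-1}\in\mathcal{D}_{k,n}$: it is the \emph{right} cosets $Hg$, not the left cosets $gH$, whose pairwise quotients $vu^{-1}$ return to $H$ (for a left coset one gets the conjugate $gh_2h_1^{-1}g^{-1}$, which need not lie in $H$), so only the right cosets are guaranteed independent. Apart from this, the work is the routine check that $H$ is a subgroup of the stated order. I would also remark that combining the lower bound $\alpha(\Gamma_{2,n})\ge 2!(n-2)!$ just obtained with Lemma~\ref{lem:UpperBound} and Theorem~\ref{thm:clique} pins down $\alpha(\Gamma_{2,n})=2!(n-2)!$ exactly when $n$ is an odd prime power, though that sharpening is not needed for the present statement.
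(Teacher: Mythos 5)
Your proof is correct and follows essentially the same route as the paper: the setwise stabilizer $H\cong S_k\times S_{n-k}$ of $\{1,\dots,k\}$ serves as the independent set, and its $\binom{n}{k}$ cosets serve as the color classes. The only difference is that you use right cosets where the paper uses left cosets, and your caveat that left cosets might fail is actually unnecessary: $\mathcal{D}_{k,n}$ is determined by cycle type and hence closed under conjugation (equivalently, every element of $gHg^{-1}$ fixes the $k$-set $g(\{1,\dots,k\})$), so both families of cosets are independent.
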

\begin{proof}
Consider $H$, the set of all permutations in $S_n$ that send $ \lbrace 1,2,\ldots,k\rbrace$ to itself (and hence $\lbrace k+1,\ldots,n\rbrace$ to itself). It is clear that $H$ is a subgroup of $S_n$ isomorphic to $S_k\times S_{n-k}$ and that $|H|=k!(n-k)!$. Since the unordered $k$-tuple $ \lbrace 1,2,\ldots,k\rbrace$ is fixed, none of these are $k$-derangements of each other, so $H$ is an independent set and $\alpha(\Gamma_{k,n})\ge k!(n-k)!$

The cosets of $H$ partition $S_n$, and each forms an independent set, since $\tau_1,\tau_2\in\sigma H$ implies that $\tau_1^{-1}\tau_2\in H$ is not a $k$-derangement and hence the vertices associated to $\tau_1$ and $\tau_2$ are not connected by an edge.  Giving each of the $\frac{n!}{k!(n-k)!}=\binom{n}{k}$ cosets a different color results in a valid coloring of $\Gamma_{k,n}$, so $\chi(\Gamma_{k,n})\le\binom{n}{k}$.
\end{proof}

\begin{corollary}
For $n$ an odd prime power, $\alpha(\Gamma_{2,n})=2(n-k)!$ and $\chi(\Gamma_{2,n})=\binom{n}{2}$.
\end{corollary}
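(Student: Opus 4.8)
The plan is to assemble this corollary from the three results already in hand, specialized to $k=2$; the hypothesis that $n$ is an odd prime power enters only through the clique number of Theorem \ref{thm:clique}. (I read the $2(n-k)!$ in the statement as $2(n-2)!$, i.e.\ $k=2$ substituted throughout.)

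First I would pin down the independence number. Theorem \ref{thm:AlphaChi} with $k=2$ supplies the lower bound $\alpha(\Gamma_{2,n})\ge 2!\,(n-2)!=2(n-2)!$ directly, via the stabilizer of $\{1,2\}$. For the matching upper bound I would feed the clique number $\omega(\Gamma_{2,n})=\binom{n}{2}$ from Theorem \ref{thm:clique} into the Frankl--Deza-type inequality of Lemma \ref{lem:UpperBound}, namely $\alpha(\Gamma_{2,n})\,\omega(\Gamma_{2,n})\le n!$. Solving gives
$$\alpha(\Gamma_{2,n})\le\frac{n!}{\binom{n}{2}}=\frac{2\,n!}{n(n-1)}=2(n-2)!,$$
so the two bounds coincide and $\alpha(\Gamma_{2,n})=2(n-2)!$.

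Next I would handle the chromatic number. The upper bound $\chi(\Gamma_{2,n})\le\binom{n}{2}$ is immediate from Theorem \ref{thm:AlphaChi} (coloring by the cosets of the stabilizer subgroup). For the reverse inequality I would invoke the elementary fact that a graph's chromatic number is at least its clique number, so $\chi(\Gamma_{2,n})\ge\omega(\Gamma_{2,n})=\binom{n}{2}$ by Theorem \ref{thm:clique}; alternatively one may use $\chi\ge |V|/\alpha = n!/\bigl(2(n-2)!\bigr)=\binom{n}{2}$ with the independence number just computed. Either route closes the gap and yields $\chi(\Gamma_{2,n})=\binom{n}{2}$.

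There is no serious obstacle here: every inequality needed is already established, and the genuine content lives in Theorem \ref{thm:clique}, whose odd-prime-power hypothesis is precisely what forces $\omega(\Gamma_{2,n})=\binom{n}{2}$ and thereby makes the independence and chromatic bounds tight at the same time. The only care required is the arithmetic simplification $n!/\binom{n}{2}=2(n-2)!$ and the observation that both the upper bound on $\alpha$ and the lower bound on $\chi$ ultimately rest on the single maximal-clique construction.
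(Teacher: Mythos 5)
Your proposal is correct and follows exactly the paper's own argument: the upper bound on $\alpha$ comes from combining Lemma \ref{lem:UpperBound} with the clique number of Theorem \ref{thm:clique}, the lower bound and the coloring come from Theorem \ref{thm:AlphaChi}, and the lower bound on $\chi$ comes from $\chi\ge\omega$. Your parenthetical reading of $2(n-k)!$ as $2(n-2)!$ is also the intended one.
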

\begin{proof}
By Lemma \ref{lem:UpperBound} and Theorem \ref{thm:clique}, we have $\binom{n}{2}\cdot\alpha(\Gamma_{2,n})\le n!$. Thus $\alpha(\Gamma_{2,n})\le n!\cdot\frac{2(n-2)!}{n!}=2(n-2)!$ and Theorem \ref{thm:AlphaChi} gives the reverse inequality.  For any graph $G$, $\chi(G)\ge\omega(G)$, so by Theorem \ref{thm:clique}, $\chi(\Gamma_{2,n})\ge\binom{n}{2}$ and again Theorem \ref{thm:AlphaChi} gives the reverse inequality.
\end{proof}

\section{Further Questions}

While the last section focused on properties of the 2-derangement graphs for $n$ an odd prime power, we are interested in finding formulas for $\omega(\Gamma_{k,n})$, $ \alpha(\Gamma_{k,n})$ and $ \chi(\Gamma_{k,n})$ for arbitrary $k$ and $n$.  
We have some faint hope that the bounds given in Theorem \ref{thm:AlphaChi} are actually equalities, but those of Theorem \ref{thm:clique} cannot be, since $\omega(\Gamma_{2,4})=5<\binom{4}{2}$ (via a computer search).
When $n$ is not an odd prime power, the clique construction of Theorem \ref{thm:clique} fails to work. 
If $n$ is not a prime power, then there is no field of that cardinality, and if $n=2^k$, then the condition that $\alpha+\beta\neq 0$ fails since $\alpha+\alpha = 0$ for all $\alpha \in\mathbb{F}_{2^k}$.
In fact, we believe the clique number for $n$ not an odd prime power is strictly smaller than $\binom{n}{k}$.  We found a clique of size 9 for $n=6$ and $k=2$, and so  $ 9 \leq \omega(\Gamma_{2,6})\leq 15$.

In another direction, the numerical evidence is overwhelming that the derangement graphs are Hamiltonian.  We hope to explore this and other questions in future work.

\section*{Acknowledgements} The first author worked on this topic with the third 
author at an REU at Missouri State University in the summer of 2009
(award \#0552573). The authors would like to acknowledge Sam Tencer's
contribution to this investigation.

\bibliographystyle{amsalpha}

\end{document}